\documentclass[12pt,leqno]{amsart}

\usepackage[T1]{fontenc}

\usepackage[
  a4paper,
  top=1.2in,
  bottom=1.1in,
  left=1in,
  right=1in,
  headheight=0.5in,
  footskip=0.5in
]{geometry}

\usepackage[none]{hyphenat}  

\usepackage{amssymb}   
\usepackage{mathrsfs}  
\usepackage{accents}   
\usepackage{mathdots}  
\usepackage{nccmath}   
\usepackage{stmaryrd}  
\usepackage{marvosym}  

\usepackage{graphicx}  
\usepackage[dvipsnames]{xcolor}
\usepackage{tikz}     
\usepackage{float}    

\usepackage[format=plain,justification=centering]{caption}
\usepackage{subfig}   

\usepackage[shortlabels]{enumitem}
\setlist[itemize,enumerate]{leftmargin=11mm}

\usepackage{comment}

\usepackage[foot]{amsaddr}
\usepackage[nodayofweek]{datetime}

\usepackage[thmtools-compat]{keytheorems}

\usepackage{cite}
\usepackage{hyperref}
\hypersetup{
  colorlinks=true,
  linkcolor=black,
  urlcolor=Blue,
  citecolor=black
}

\usepackage[capitalise,nameinlink]{cleveref}

\crefname{statement}{Statement}{Statements}
\crefname{conjecture}{Conjecture}{Conjectures}
\crefname{question}{Question}{Questions}
\crefname{observation}{Observation}{Observations}
\crefname{claim}{Claim}{Claims}
\crefname{fact}{Fact}{Facts}
\crefname{axiom}{Axiom}{Axioms}

\newkeytheorem{theorem}[parent=section]

\newkeytheorem{
  statement,lemma,proposition,corollary,conjecture,
  question,observation,claim,fact
}[sibling=theorem,style=plain]

\newkeytheorem{axiom,definition,example}[
  sibling=theorem,
  style=definition
]

\newkeytheorem{remark}[sibling=theorem,style=remark]

\newcommand{\mca}{\mathcal}

\newcommand{\cg}{\text{\sf{cg}}}

\newcounter{tbox}
\crefname{tbox}{Statement}{Statements}

\newcommand{\sta}[1]{%
  \par\medskip\medskip
  \refstepcounter{tbox}%
  \noindent
  \parbox{\linewidth}{(\thetbox) \emph{#1}}%
  \par\vspace*{0.4cm}%
}

\newcommand{\mylongtitle}[1]{%
  \ifodd\value{page}%
    \protect\parbox{0.97\linewidth}{#1}\hfill
  \else
    \hfill\protect\parbox{0.97\linewidth}{#1}%
  \fi
}

\title{Cographs in chordal graphs}

\author{Sepehr Hajebi$^{\ast \dagger}$}
\thanks{$^{\ast}$ Department of Mathematics, University of Toronto, Toronto, Ontario, Canada.}
\thanks{$^{\dagger}$ Supported by the Canada Postdoctoral Research Award (CPRA) program.}
\date{\today}

\begin{document}

\maketitle
\begin{abstract}
For a graph $G$, let $\cg(G)$ be the maximum $k\in \mathbb N$ for which $G$ contains a $k$-vertex cograph as an induced subgraph. The Erd\H{o}s-Hajnal conjecture asserts that for every proper hereditary graph class $\mca{G}$, there exists $\gamma\in(0,1]$ such that every $n$-vertex graph $G\in\mca{G}$ satisfies $\cg(G)\geq n^\gamma$. This is still open; however, for some hereditary classes, one may guarantee much stronger bounds. For instance, every chordal graph $G$ on $n\geq 2$ vertices satisfies $\cg(G)\geq n/(2\log n)$. Paul Seymour asked whether this bound is optimal up to a constant factor. We answer this affirmatively, even for interval graphs: For every integer $n\geq 2$, we construct an $n$-vertex interval graph $G$ with $\cg(G)<64n/\log n$.
\end{abstract}

\section{Introduction}

Graphs in this paper have finite vertex sets, no loops and no parallel edges. We write $\mathbb N$ for the set of all positive integers, and for integers $k,k'$, we denote by $\{k,\ldots,k'\}$ the set of all integers at least $k$ and at most $k'$ (thus, $\{k,\ldots,k'\}=\varnothing$ if and only if $k'<k$). All logarithms are to base $2$. The vertex set and edge set of a graph $G$ are denoted, respectively, by $V(G)$ and $E(G)$. A \textit{clique in $G$} is a set of pairwise adjacent vertices. For $X\subseteq V(G)$, we denote by $G[X]$ the induced subgraph of $G$ with vertex set $X$. A \textit{class} is a set of graphs considered up to isomorphism. A class is \textit{proper} if it does not contain all graphs, and \textit{hereditary} if it is closed under taking induced subgraphs. See \cite{diestel} for standard graph-theoretic terminology.

A \textit{cograph} is a graph with no induced subgraph that is a four-vertex path. Equivalently, cographs are precisely the graphs obtained from single-vertex graphs by iteratively taking disjoint unions and complements; see \cite{CorneilLerchsBurlingham}. For a nonnull graph $G$, let $\cg(G)$ be the largest cardinality of a set $X\subseteq V(G)$ for which $G[X]$ is a cograph. The Erd\H{o}s-Hajnal conjecture \cite{ErdosHajnal} asserts (in one of several equivalent formulations) that
\begin{conjecture}[Erd\H{o}s and Hajnal \cite{ErdosHajnal}]\label{EH}
    For every proper hereditary class $\mca{G}$ of graphs, there exists $\gamma\in(0,1]$ such that every $n$-vertex graph $G\in\mca{G}$ satisfies
$\cg(G)\geq n^\gamma$.
\end{conjecture}

This remains open except for a not impressively long list of classes. On the other hand, for some classes known to satisfy the conjecture, one may in fact guarantee bounds close to linear. A famous example of such a class is that of \textit{chordal} graphs: graphs with no induced subgraph that is a cycle on four or more vertices. It is well known that 
$\cg(G)\geq n/(\lfloor\log n\rfloor+1)\geq n/(2\log n)$ for every chordal graph $G$ on $n\geq 2$ vertices.\footnote{A quick argument: Every $n$-vertex chordal graph has a clique whose deletion leaves components on at most $n/2$ vertices \cite{GilbertRoseEdenbrandt}. Applying this iteratively in the remaining components removes all vertices after at most $\lfloor\log n\rfloor+1$ rounds. The cliques removed in any one round are disjoint with no edges between them, and so their union induces a cograph. Choose a round where the largest number of vertices is removed. Since the rounds together remove all $n$ vertices, this round removes at least $n/(\lfloor\log n\rfloor+1)$ vertices, and these vertices induce a cograph.} 

This bound is special, in that even natural superclasses of chordal graphs need not admit anything close to it. For instance, it is well known \cite{LovaszPerfect} that $\cg(G)\geq n^{1/2}$ for every $n$-vertex perfect graph $G$, and this is best possible up to a constant factor.\footnote{Another quick argument: For each $t\in\mathbb N$, consider the line graph $G$ of the complete bipartite graph $K_{t,t}$. Then $G$ has $n=t^2$ vertices and is perfect \cite{Konig1916}. On the other hand, every induced subgraph of $G$ on $k\in\mathbb N$ vertices with no four-vertex induced path corresponds to a (spanning) subgraph $H$ of $K_{t,t}$ with $k$ edges and no five-vertex (not necessarily induced) path. It follows that every component of $H$ is either a tree or a four-vertex cycle, and so $k=|E(H)|\leq |V(H)|=2t=2n^{1/2}$.}

So, it is natural to ask whether the bound for chordal graphs is also optimal up to constant factors. Paul Seymour mentioned this question to the author at the Barbados Graph Theory Workshop in December 2022. We found recently (in the context of the related topic of ``pure pairs in perfect graphs'') that the answer is indeed ``yes'' even in the smaller class of interval graphs. (Recall that an \textit{interval graph} is the intersection graph of a finite family of intervals of the real line, and that every interval graph is chordal.)
\medskip

The main result of this note is the following:

\begin{restatable}{theorem}{main}\label{thm:main}
    For every integer $n\geq 2$, there exists an $n$-vertex interval graph $G$ with
    $$\cg(G)<\mfrac{64n}{\log n}.$$
\end{restatable}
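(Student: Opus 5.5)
The plan is an explicit construction of a multilevel family of overlapping intervals, followed by a structural description of the induced cographs. The structural fact is that a graph which is both chordal and a cograph has no induced $C_4$ and no induced $P_4$, so it is trivially perfect. Hence each of its components has a universal vertex, and deleting that vertex leaves a graph of the same type. So for any $X\subseteq V(G)$ with $G[X]$ a cograph, $G[X]$ is the comparability graph of a rooted forest. In interval terms, every component of $G[X]$ contains an interval meeting all other intervals of that component. Hence all of them lie in a window of length at most three times the length of that interval. I will use this recursive description to bound $|X|$.

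\textbf{Construction.} Let $L\approx \frac12\log n$ be the number of levels and let $m\approx n/L$. For each level $i\in\{0,\ldots,L-1\}$, take the $2^i$ intervals
\[
I_{i,j}=[\,j2^{-i},\,(j+2)2^{-i}) ,\qquad j\in\{0,\ldots,2^i-1\}.
\]
Consecutive intervals of the same level overlap, so each level by itself forms a chain, that is, an induced path. Replace each $I_{i,j}$ by a clique of $\lceil m/2^i\rceil$ identical copies. Then every level carries about $m$ vertices, the total is about $n$, and the graph is still an interval graph. Substituting cliques for vertices creates no new induced $P_4$ in an optimal cograph, so it suffices to work with the weighted family, where the weight of $I_{i,j}$ is $m/2^i$. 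Afterwards I will pad or trim with isolated vertices to get exactly $n$ vertices and fix the constants so that the bound comes out below $64n/\log n$.

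\textbf{Key claim.} Every trivially perfect subfamily has total weight at most $Cm$ for an absolute constant $C$. This gives $\cg(G)\le Cm = O(n/\log n)$. Informally, a cograph can exploit only a bounded number of levels at any location. Two local observations support this:
\begin{itemize}
\item Within one level, the chosen intervals induce a disjoint union of paths with at most three vertices, since a longer stretch of the chain contains an induced $P_4$.
\item A root at level $k$ confines its whole component to a window of length $O(2^{-k})$.
\end{itemize}
I would prove the claim by induction on windows. For a dyadic window $W$ of length $2^{-k}$, define $g(W)$ as the maximum weight of a trivially perfect subfamily of intervals inside $W$ at levels at least $k$. The aim is to show
\[
g(W)\le C\,m\,2^{-k},
\]
which says the weight is proportional to length, exactly as for a single level. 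The inductive step splits the component at its root and uses the fact that the root, together with the few intervals at levels near $k$ that can coexist with it without creating a $P_4$, blocks a constant fraction of the deeper levels.

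\textbf{Main obstacle.} The hardest step is making this induction close without the constant compounding. A naive recursion, in which we remove the root, recurse on the roughly $3$ subwindows and add $m/2^k$, multiplies by a factor greater than $1$ at each level and loses a factor of $\log n$. What I would try first is to exploit the P$_4$-freeness across two adjacent levels. Suppose a component rooted at level $k$ uses deep intervals on both sides of a point where level-$(k+1)$ intervals overlap. Then some chain $a$–$b$–$c$–$d$ appears unless the component commits to one side. This should give a recursion of the form $g(W)\le m2^{-k}+\max(\ldots)$ over children, rather than a sum, or a sum with a deficit. If uniform multiplicities make this too delicate, the fallback is to perturb them to $m_i=m(1+\varepsilon)^{-i}$ times geometric corrections, or to use a potential function that charges each chosen interval to a private piece of the line of proportional length. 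Either way, the target is that each point of the line is charged $O(1)$ times in total.
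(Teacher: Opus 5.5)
There is a genuine gap, and it lies in the construction itself: your Key claim is false. At every level $i$, keep only the intervals $I_{i,j}$ with $j$ even. Writing $j=2k$ gives $I_{i,2k}=[k2^{1-i},(k+1)2^{1-i})$, so these are exactly the dyadic intervals $[0,2),[0,1),[0,1/2),[1/2,1),\ldots$. Any two of them are disjoint or nested, so their intersection graph is the comparability graph of a rooted tree and hence $P_4$-free. Blowing each one up into its clique of twins keeps it $P_4$-free. Level $i\ge 1$ contributes $2^{i-1}\lceil m/2^i\rceil\ge m/2$ vertices, so your $G$ contains an induced cograph on at least $Lm/2\approx n/2$ vertices. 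This exceeds $64n/\log n$ for all large $n$.

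So the $\log n$ factor you saw the naive recursion lose is not an artifact of the analysis; it is really there. The cause is structural. Twin copies cost a cograph nothing, and each level contains only two phases of translates, which line up dyadically across levels. No reweighting of the levels (for instance your fallback $m_i=m(1+\varepsilon)^{-i}$) can repair this, because the laminar subfamily always captures half of each level's weight. Separately, your window claim is inaccurate: a universal interval of a component can meet much longer intervals of that component. Also, the inductive step is only sketched.

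The paper's construction differs exactly where yours fails. For each length $2^{b-1}$, $b\in\{1,\ldots,m+1\}$, it takes all $2^m$ integer translates $[a,a+2^{b-1})$, $a\in\{1,\ldots,2^m\}$. These are distinct, non-twin intervals with every offset present at every scale, so the dyadic laminar subfamily has only $O(2^m)$ members. The bound $8M$ with $M=2^{m+1}$ is then proved by a global count rather than by induction on windows:
\begin{itemize}
\item Since $H$ has no induced $C_4$ or $P_4$, the sets of maximal cliques through each interval are laminar.
\item The clique points $x^+_{\mathcal C}$ cut $\{1,\ldots,M\}$ into gaps $S_1,\ldots,S_{t+1}$.
\item Intervals no longer than the larger of their two end-gaps number at most $4M$. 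An interval of length $2^p$ starting in $S_k$ must contain the clique point just after $S_k$, so at most $2^p$ such intervals exist for each $2^p\le|S_k|$, and likewise for right ends.
\item For the remaining intervals, the graph $W$ joining the gaps that contain the two ends is outerplanar by laminarity, hence $2$-degenerate.
\item Each edge $ij$ of $W$ carries at most two lengths. Three lengths would give $\ell(J_3)-\ell(J_1)\ge 3\max\{|S_i|,|S_j|\}$, whereas ends confined to $S_i$ and $S_j$ force $\ell(J_3)-\ell(J_1)<|S_i|+|S_j|$.
\item Each length contributes at most $\min\{|S_i|,|S_j|\}$ intervals, which gives a further $4M$.
\end{itemize}
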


\section{Proof of \Cref{thm:main}}\label{sec:proof}

A graph $G$ is \textit{outerplanar} if it admits a planar drawing with all vertices on the boundary of the outer face. We say that $G$ is \textit{$2$-degenerate} if every nonnull induced subgraph of $G$ has a vertex of degree at most $2$. We will use the following two standard facts about outerplanar graphs; see \cite{BernhartKainen} for the first and \cite[p.~107]{Harary} for the second:

\begin{theorem}\label{thm:outerplanarnoncrossing}
    A graph $G$ is outerplanar if and only if there is an enumeration $v_1,\ldots,v_n$ of the vertices of $G$ such that for all $i,j,k,l\in\{1,\ldots,n\}$ with $i<j<k<l$, either $v_iv_k\notin E(G)$ or $v_jv_l\notin E(G)$.
\end{theorem}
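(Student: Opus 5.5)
The plan is to prove both directions geometrically, placing the vertices on a circle in the converse direction and reducing to a Hamiltonian outer cycle in the forward direction.

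\emph{Converse direction.} Suppose $v_1,\ldots,v_n$ is an enumeration with the stated property. Place $v_1,\ldots,v_n$ in this cyclic order on a circle $S$, and draw every edge as a straight chord of $S$. Two chords with four distinct endpoints on $S$ cross exactly when their endpoints interleave around $S$. Up to relabelling, interleaving means the endpoints are $v_i,v_k$ and $v_j,v_l$ with $i<j<k<l$, and the hypothesis forbids both of these edges from being present. Chords sharing an endpoint meet only at that endpoint. Hence this is a planar drawing. The unbounded face contains the whole exterior of $S$, so every vertex lies on the boundary of the outer face, and $G$ is outerplanar.

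\emph{Forward direction: reduction.} Let $G$ be outerplanar. The property in the statement passes to spanning subgraphs: an enumeration that works for a supergraph on the same vertex set also works for $G$. So I may add edges to $G$ as long as outerplanarity is preserved, and assume $G$ is edge-maximal outerplanar. The cases $n\le 3$ are trivial, since the condition $i<j<k<l$ is vacuous for $n\le 3$; so assume $n\geq 3$.

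\emph{Forward direction: the outer cycle.} The key step is to show that in an edge-maximal outerplanar drawing with $n\ge 3$, the boundary of the outer face is a Hamiltonian cycle $C$. I would argue by contradiction. If the boundary walk of the outer face is not a simple cycle through all vertices, then either $G$ is disconnected, or it has a cut vertex, or two vertices that are consecutive on the boundary walk are nonadjacent. In each case one can draw a new edge inside the outer face joining two boundary vertices that are consecutive in the walk but distinct and nonadjacent. After doing so every vertex is still on the outer boundary, which contradicts maximality. Pinning down this case analysis cleanly, with the appropriate walk around a cut vertex, is the main obstacle I expect.

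\emph{Forward direction: conclusion.} Once $C$ is available, enumerate $v_1,\ldots,v_n$ in the order along $C$. By the Jordan curve theorem, every edge not on $C$ lies in the closed bounded region $D$ of $C$, because the outer face is the exterior of $C$. Suppose $i<j<k<l$ and both $v_iv_k$ and $v_jv_l$ are edges. If either is an edge of $C$, the endpoints would be consecutive along $C$, which interleaving rules out. So both are chords of $D$. The chord $v_iv_k$ splits $D$ into two regions, one containing $v_j$ and the other containing $v_l$ on its boundary, since these lie on the two different arcs of $C$ between $v_i$ and $v_k$. The curve $v_jv_l$ inside $D$ must therefore cross $v_iv_k$ at an interior point, contradicting planarity. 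This completes the plan.
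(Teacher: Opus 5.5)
The paper gives no proof of this statement; it quotes it as a standard fact from \cite{BernhartKainen}. Your argument is therefore a self-contained substitute for a citation rather than a variant of anything in the paper. Note that the paper only uses the ``if'' direction: in the proof that $W$ is outerplanar, it shows the natural order of $V(W)$ is non-crossing. Your chords-on-a-circle proof of that direction is complete and correct. The end of your ``only if'' direction is also fine. Neither $v_iv_k$ nor $v_jv_l$ can be an edge of $C$ (since $k<n$ and $j>1$), and the theta-graph argument inside the disc bounded by $C$ forces a crossing. (Minor: after disposing of $n\leq 3$ you mean to assume $n\geq 4$.)

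The step that is not yet a proof is the lemma that the outer face of an edge-maximal outerplanar graph with $n\geq 3$ is bounded by a Hamiltonian cycle. Your sketch of it is off in two places.

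First, your third case is vacuous, because consecutive vertices of a boundary walk are always joined by the edge being traversed. What actually makes the case split exhaustive is the standard fact that every face of a $2$-connected plane graph is bounded by a cycle. So a connected graph on $n\geq 3$ vertices with no cut vertex has a cycle as its outer boundary, and that cycle is Hamiltonian because all vertices lie on the outer face.

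Second, in the cut-vertex case the new edge does not join consecutive vertices of the walk. Argue instead as follows. The outer boundary walk is a closed walk through all vertices. Between consecutive occurrences of the cut vertex $v$ it stays inside one component of $G-v$, and it visits at least two such components. Hence some occurrence of $v$ has predecessor $u$ and successor $w$ in different components of $G-v$, so $uw\notin E(G)$. Draw $uw$ in the outer face alongside $uv\cup vw$. This cuts off only a thin triangular face in that one angle at $v$, and $v$ stays on the outer face because it occurs at least twice on the walk.

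In the disconnected case, the new edge is a bridge and does not split the outer face. With these repairs, which rest only on standard facial-walk facts, your forward direction goes through.
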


\begin{theorem}\label{thm:outerplanar2degenerate}
    Every outerplanar graph is $2$-degenerate.
\end{theorem}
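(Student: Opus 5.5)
The plan is to use \Cref{thm:outerplanarnoncrossing} directly; it is a purely combinatorial characterisation and avoids Euler's formula. The edge bound $2n-3$ from Euler's formula only yields a vertex of degree at most $3$, so it is not enough here.

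\textbf{Reduction to $G$ itself.} The property in \Cref{thm:outerplanarnoncrossing} is inherited by induced subgraphs: if $v_1,\ldots,v_n$ is a good enumeration of $G$ and $X\subseteq V(G)$ is nonempty, then listing $X$ in the inherited order is again a good enumeration of $G[X]$. Hence every nonnull induced subgraph of an outerplanar graph is outerplanar. It therefore suffices to show that every nonnull outerplanar graph $G$ has a vertex of degree at most $2$.

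\textbf{Finding the vertex.} Fix an enumeration $v_1,\ldots,v_n$ as in \Cref{thm:outerplanarnoncrossing}. For an edge $v_av_b$ with $a<b$, call $b-a$ its \emph{span}.

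If every edge has span $1$, then $G$ is a subgraph of the path $v_1v_2\cdots v_n$, so $v_1$ has degree at most $1$ and we are done.

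Otherwise, choose an edge $v_iv_j$ with $j\ge i+2$ of minimum span among all edges of span at least $2$. We claim that $v_{i+1}$ has degree at most $2$. Let $v_k$ be a neighbour of $v_{i+1}$.
\begin{itemize}
\item If $k>j$, then $i<i+1<j<k$ and both $v_iv_j$ and $v_{i+1}v_k$ are edges. This contradicts \Cref{thm:outerplanarnoncrossing}.
\item If $k<i$, then $k<i<i+1<j$ and both $v_kv_{i+1}$ and $v_iv_j$ are edges, again a contradiction.
\item So $i\le k\le j$. If $i+3\le k\le j$, the edge $v_{i+1}v_k$ has span $k-i-1$, which satisfies $2\le k-i-1<j-i$. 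This contradicts the minimality of $v_iv_j$.
\end{itemize}
Hence every neighbour of $v_{i+1}$ lies in $\{v_i,v_{i+2}\}$, and $v_{i+1}$ has degree at most $2$.

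\textbf{Main obstacle.} The only delicate point is the choice of the extremal object: the minimum-span chord among edges of span at least $2$. One must also check that the boundary case $j=i+2$ is covered. In that case the neighbour $v_j=v_{i+2}$ is simply one of the two allowed neighbours, so the case analysis above already handles it.
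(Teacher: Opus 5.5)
Your proof is correct. It takes a different route from the paper, which gives no proof of this statement and simply cites Harary. There the fact is classically obtained from the structure of maximal outerplanar graphs (triangulated polygons, which always contain vertices of degree $2$), together with the observation that outerplanarity passes to subgraphs.

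You instead derive it combinatorially from \Cref{thm:outerplanarnoncrossing}. The noncrossing enumeration restricts to induced subgraphs. The vertex $v_{i+1}$ just inside a minimum-span chord of span at least $2$ can then only be adjacent to $v_i$ and $v_{i+2}$. This is the combinatorial version of the ``ear'' cut off by an innermost chord in the geometric argument. Your case analysis is complete: neighbours outside $[i,j]$ are excluded by noncrossing, neighbours $v_k$ with $i+3\le k\le j$ by minimality, and the boundary case $j=i+2$ is handled. Your remark that the Euler bound $2n-3$ alone only forces a vertex of degree at most $3$ is also right.

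Your route buys self-containedness. The only external input is the ``only if'' direction of \Cref{thm:outerplanarnoncrossing}, and in the paper's application even that is unnecessary. The proof of \eqref{st:Wouterplanar} actually shows that the natural order $1,\ldots,t+1$ of $V(W)$ has no crossing pair. Applying your argument directly to that order and its restrictions gives $2$-degeneracy of $W$, bypassing both cited theorems and any appeal to planar embeddings. The classical route relies on embedding-based structure, but it gives somewhat more information, for instance two vertices of degree at most $2$ when $n\ge 2$.
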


We may now prove our main result:

\begin{proof}[Proof of \Cref{thm:main}]
Choose the unique $m\in\mathbb N$ for which $m2^{m-1}<n\leq(m+1)2^m$. Let $M=2^{m+1}$ and let
$$\mca{I}=\left\{[a,a+2^{b-1}):\ a\in\{1,\ldots,2^m\},\ b\in\{1,\ldots,m+1\}\right\}.$$
Each element of $\mca{I}$ is a half-open interval with endpoints in $\{1,\ldots,M\}$ whose length is a nonnegative integer power of $2$. We denote the length of each interval $I\in\mca{I}$ by $\ell(I)$.
\medskip

Let $\Gamma$ be the intersection graph of the intervals in $\mca{I}$. Then $\Gamma$ is an interval graph with $|V(\Gamma)|=|\mca{I}|=(m+1)2^m\geq n$. So it suffices to prove that $\cg(\Gamma)<64n/\log n$; the result would then follow by taking $G$ to be any $n$-vertex induced subgraph of $\Gamma$.
\medskip

Let $\mca{J}\subseteq\mca{I}$ be such that the intersection graph $H=\Gamma[\mca{J}]$ of the intervals in $\mca{J}$ is a cograph. The goal is to prove that $|\mca{J}|<64n/\log n$. Assume that $\mca{J}\neq\varnothing$, and for every $J\in\mca{J}$, let $\mathscr{C}_J$ be the set of all maximal cliques of $H$ containing $J$. We claim that $\{\mathscr{C}_J:J\in\mca{J}\}$ is a ``laminar'' family of sets. Specifically,

\sta{\label{st:laminar} Let $J,J'\in\mca{J}$ be distinct. If $JJ'\notin E(H)$, then $\mathscr{C}_J\cap\mathscr{C}_{J'}=\varnothing$. If $JJ'\in E(H)$, then either $\mathscr{C}_J\subseteq\mathscr{C}_{J'}$ or $\mathscr{C}_{J'}\subseteq\mathscr{C}_J$.}

The first assertion is immediate. For the second assertion, assume that $JJ'\in E(H)$, and suppose for a contradiction that there exist $\mca{C}\in\mathscr{C}_J\setminus\mathscr{C}_{J'}$ and $\mca{C}'\in\mathscr{C}_{J'}\setminus\mathscr{C}_J$. Since $\mca{C}$ is a maximal clique in $H$ with $J'\notin\mca{C}$, it follows that there exists $I\in\mca{C}$ such that $IJ'\notin E(H)$. Likewise, since $\mca{C}'$ is a maximal clique in $H$ with $J\notin\mca{C}'$, it follows that there exists $I'\in\mca{C}'$ such that $I'J\notin E(H)$. In particular, $I,I',J,J'$ are pairwise distinct. But now if $II'\in E(H)$, then $H[\{I,I',J,J'\}]$ is a four-vertex cycle, and if $II'\notin E(H)$, then $H[\{I,I',J,J'\}]$ is a four-vertex path; the former contradicts the fact that $H$ is an interval graph, and the latter contradicts the assumption that $H$ is a cograph. This proves \eqref{st:laminar}.
\medskip

Let $\mathscr{C}=\bigcup_{J\in\mca{J}}\mathscr{C}_J$ be the set of all maximal cliques in $H$. For each $\mca{C}\in\mathscr{C}$, since $\mca{C}$ is a clique in $H$ and the half-open intervals in $\mca{C}\subseteq\mca{J}\subseteq\mca{I}$ have endpoints in $\{1,\ldots,M\}$, it follows that there exists $x_{\mca{C}}\in\{1,\ldots,M-1\}$ such that
$$x^+_{\mca{C}}=x_{\mca{C}}+\mfrac{1}{2}\in\bigcap_{J\in\mca{C}}J.$$
From the maximality of the cliques in $\mathscr{C}$, we deduce that

\sta{\label{st:maximalproperty} For all distinct $\mca{C},\mca{C}'\in\mathscr{C}$, we have $x^+_{\mca{C}}\neq x^+_{\mca{C}'}$. Moreover, for all $J\in\mca{J}$ and every $\mca{C}\in\mathscr{C}$, we have $x_{\mca{C}}^+\in J$ if and only if $\mca{C}\in\mathscr{C}_J$.}

By the first assertion in \eqref{st:maximalproperty}, we may write $\mathscr{C}=\{\mca{C}_1,\ldots,\mca{C}_t\}$, where $t=|\mathscr{C}|\in\mathbb N$, such that
$x^+_{\mca{C}_1}<\cdots<x^+_{\mca{C}_t}.$
Let
$X=\left\{x^+_{\mca{C}_1},\ldots,x^+_{\mca{C}_t}\right\}$, and for each $J\in\mca{J}$, let $X_J=X\cap J$. Then, for every $J\in\mca{J}$, there exist unique $\alpha_J,\beta_J\in\{1,\ldots,t+1\}$ with $\alpha_J<\beta_J$ such that
$$X_J=\{x^+_{\mca{C}_i}:i\in\{\alpha_J,\ldots,\beta_J-1\}\}.$$
Moreover, from \eqref{st:laminar} and the second assertion in \eqref{st:maximalproperty}, it follows that

\sta{\label{st:laminar2} For all distinct $J,J'\in\mca{J}$, either $X_J\cap X_{J'}=\varnothing$, or $X_J\subseteq X_{J'}$, or $X_{J'}\subseteq X_J$.}

Now, recall that $0<x_{\mca{C}_1}<\cdots<x_{\mca{C}_t}<M$. Write $x_{\mca{C}_{0}}=0$ and $x_{\mca{C}_{t+1}}=M$. For each $i\in \{1,\ldots, t+1\}$, let
$S_i=\{x_{\mca{C}_{i-1}}+1,\ldots,x_{\mca{C}_i}\}$. Then $(S_1,\ldots,S_{t+1})$ is a partition of $\{1,\ldots,M\}$. Moreover, for each $J\in\mca{J}$, the left end of $J$ belongs to $S_{\alpha_J}$ and the (open) right end of $J$ belongs to $S_{\beta_J}$.

Let
$$\mca{K}_1=\{J\in\mca{J}:\ell(J)\leq\max\{|S_{\alpha_J}|,|S_{\beta_J}|\}\};$$
$$\mca{K}_2=\{J\in\mca{J}:\ell(J)\geq\max\{|S_{\alpha_J}|,|S_{\beta_J}|\}\}.$$

Then $\mca{J}=\mca{K}_1\cup\mca{K}_2$. We further claim that:

\sta{\label{st:K1small} $|\mca{K}_1|\leq4M$.}

To see this, note that for each $k\in\{1,\ldots,t+1\}$ and every $L\in\{1,\ldots,|S_k|\}$, there are at most $L$ intervals $J\in\mca{J}$ with $\ell(J)=L$ and $\alpha_J=k$, and likewise there are at most $L$ intervals $J\in\mca{J}$ with $\ell(J)=L$ and $\beta_J=k$. Since the intervals in $\mca{J}\subseteq\mca{I}$ have nonnegative integer powers of $2$ as their lengths, it follows that
$$|\mca{K}_1|\leq\sum_{k=1}^{t+1}\sum_{p=0}^{\lfloor\log|S_k|\rfloor}2(2^p)
=2\sum_{k=1}^{t+1}\left(2^{\lfloor\log|S_k|\rfloor+1}-1\right)
\leq4\sum_{k=1}^{t+1}|S_k|=4M.$$
This proves \eqref{st:K1small}.
\medskip

We would also like to prove that $|\mca{K}_2|\leq4M$, but this will take several steps. First, we define $W$ to be the graph with $V(W)=\{1,\ldots,t+1\}$ such that for all distinct $i,j\in\{1,\ldots,t+1\}$ with $i<j$, we have $ij\in E(W)$ if and only if there exists $J\in\mca{J}$ such that $\alpha_J=i$ and $\beta_J=j$. We claim that

\sta{\label{st:Wouterplanar} $W$ is outerplanar.}

Suppose not. Then by \Cref{thm:outerplanarnoncrossing}, there are $i,j,k,l\in\{1,\ldots,t+1\}$ with $i<j<k<l$ such that $ik\in E(W)$ and $jl\in E(W)$. In particular, there are distinct $J,J'\in\mca{J}$ such that
$\alpha_J=i$, $\alpha_{J'}=j$, $\beta_J=k$, and $\beta_{J'}=l$. But now we have
$$x^+_{\mca{C}_i}\in X_J\setminus X_{J'};\qquad
x^+_{\mca{C}_j}\in X_J\cap X_{J'};\qquad
x^+_{\mca{C}_{l-1}}\in X_{J'}\setminus X_J;$$
contrary to \eqref{st:laminar2}. This proves \eqref{st:Wouterplanar}.

\sta{\label{st:sumofS}
$\displaystyle\sum_{\substack{1\leq i<j\leq t+1\\
ij\in E(W)}}\min\{|S_i|,|S_j|\}\leq2M$.}

From \eqref{st:Wouterplanar} and \Cref{thm:outerplanar2degenerate}, it follows that $W$ is $2$-degenerate. In particular, there is an orientation of the edges of $W$ such that in the resulting digraph $D$, every vertex has outdegree at most $2$. Therefore,
$$\sum_{\substack{1\leq i<j\leq t+1\\ ij\in E(W)}}\min\{|S_i|,|S_j|\}
=\sum_{i=1}^{t+1}\sum_{\substack{1\leq j\leq t+1\\
(i,j)\in E(D)}}\min\{|S_i|,|S_j|\}
\leq\sum_{i=1}^{t+1}2|S_i|=2M.$$
This proves \eqref{st:sumofS}.
\medskip

Let us now show that

\sta{\label{st:K2small} $|\mca{K}_2|\leq4M$.}

By \eqref{st:sumofS}, it suffices to prove that for each $ij\in E(W)$ where $i,j\in\{1,\ldots,t+1\}$ with $i<j$, we have $|\{J\in\mca{K}_2:\alpha_J=i,\ \beta_J=j\}|
\leq2\min\{|S_i|,|S_j|\}$. In turn, since for each fixed length, either endpoint in $S_i$ or $S_j$ determines the interval, it suffices to show that
$|\{\ell(J):J\in\mca{K}_2,\ \alpha_J=i,\ \beta_J=j\}|\leq 2$. To see this, suppose for a contradiction that there are distinct intervals $J_1,J_2,J_3\in\mca{K}_2$ with $\ell(J_1)<\ell(J_2)<\ell(J_3)$ such that $\alpha_{J_1}=\alpha_{J_2}=\alpha_{J_3}=i$ and $\beta_{J_1}=\beta_{J_2}=\beta_{J_3}=j$. Since the $\ell(J_1),\ell(J_2),\ell(J_3)$ are all integer powers of $2$, it follows that $\ell(J_3)\geq4\ell(J_1)$, which, combined with $J_1\in\mca{K}_2$, yields
$$\ell(J_3)-\ell(J_1)\geq3\ell(J_1)\geq3\max\{|S_i|,|S_j|\}.$$
On the other hand, since $\alpha_{J_3}=i$ and $\beta_{J_3}=j$, it follows that $\ell(J_3)\leq\max S_j-\min S_i$, and since $\alpha_{J_1}=i$ and $\beta_{J_1}=j$, it follows that
$\ell(J_1)\geq\min S_j-\max S_i$. But then
$$\ell(J_3)-\ell(J_1)\leq(\max S_i-\min S_i)+(\max S_j-\min S_j)<|S_i|+|S_j|\leq 2\max\{|S_i|,|S_j|\};$$
a contradiction. This proves \eqref{st:K2small}.
\medskip

Finally, from \eqref{st:K1small} and \eqref{st:K2small}, it follows that
$|\mca{J}|\leq|\mca{K}_1|+|\mca{K}_2|\leq8M=2^{m+4}$. Since $m2^{m-1}<n$, it follows that $|\mca{J}|\leq 2^{m+4}<32n/m$.
Moreover, since $n\leq(m+1)2^m$ and $m+1\leq2^m$, we have $n\leq2^{2m}$, and so $m\geq(1/2)\log n$. Hence
$$|\mca{J}|<\mfrac{32n}{m}\leq\mfrac{64n}{\log n}.$$
This completes the proof of \Cref{thm:main}.
\end{proof}

\bibliographystyle{abbrv}
\bibliography{ref}
\end{document}